\newcommand*{\R}{\mathds{R}}
\newcommand{\Ro}{\mathbf{R}}
\newcommand{\supp}{\operatorname{supp}}
\newtheorem{theorem}{Theorem}
\newtheorem{remark}[theorem]{Remark}
\newtheorem{problem}[theorem]{Problem}
\DeclarePairedDelimiter{\abs}{\lvert}{\rvert}
\DeclarePairedDelimiter{\norm}{\lVert}{\rVert}
\newcommand{\la}{\lambda}
\newcommand{\rec}{\mathbf R}
\newcommand{\kernel}{\mathbf k}
\newcommand{\signal}{\mathbf x}
\newcommand{\data}{\mathbf y}
\newcommand{\noise}{{\boldsymbol \delta}}
\newcommand{\fourier}{\mathcal{F}}
\newcommand{\ew}{\mathbb{E}}
\newcommand{\loss}{\mathcal{L}}
\newcommand{\lossA}{\mathcal{L}_A}
\newcommand{\lossB}{\mathcal{L}_B}
\newcommand{\lossC}{\mathcal{L}_C}
\newcommand{\err}{\operatorname{L2err}}
\newcommand{\mcn}{\operatorname{MCN}}
\newcommand{\mnorm}[1]{\bigl \lVert #1 \bigr \rVert}
\DeclareMathOperator*{\argmin}{arg\,min}
\author{Markus Haltmeier}
\affil{Department of Mathematics, University of Innsbruck\authorcr
Technikerstrasse 13, 6020 Innsbruck, Austria \authorcr
E-mail: \texttt{markus.haltmeier@uibk.ac.at}}
\author{Gyeongha Hwang}
\affil{Department of Mathematics, Yeungnam University\authorcr
280 Daehak-Ro, Gyeongsan, Gyeongbuk 38541, South Korea\authorcr
E-mail: \texttt{ghhwang@yu.ac.kr}}
\title{ECALL: Expectation-calibrated learning for unsupervised blind deconvolution}
\date{January 29, 2024}
\numberwithin{equation}{section}
\numberwithin{figure}{section}
\numberwithin{theorem}{section}
\begin{document}

\maketitle

\begin{abstract}
Blind deconvolution aims to recover an original image from a blurred version in the case where the blurring kernel is unknown. It has wide applications in diverse fields such as astronomy, microscopy, and medical imaging. Blind deconvolution is a challenging ill-posed problem that suffers from significant non-uniqueness. Solution methods therefore require the integration of appropriate prior information. Early approaches rely on hand-crafted priors for the original image and the kernel. Recently, deep learning methods have shown excellent performance in addressing this challenge. However, most existing learning methods for blind deconvolution require a paired dataset of original and blurred images, which is often difficult to obtain. In this paper, we present a novel unsupervised learning approach named ECALL (Expectation-CALibrated Learning) that uses separate unpaired collections of original and blurred images. Key features of the proposed loss function are cycle consistency involving the kernel and associated reconstruction operator, and terms that use expectation values of data distributions to obtain information about the kernel. Numerical results are used to support ECALL. 

\medskip \noindent
\textbf{Key words:} Inverse problems, unsupervised learning, unknown forward operator, blind deconvolution. 

\medskip \noindent
\textbf{MSC codes:} 65F22; 68T07
\end{abstract}

\section{Introduction}\label{sec:1}

Blind deconvolution addresses the problem of recovering the original unknown potentially vector valued image $\signal \colon \R^d \to \R^c$ from its blurry and noisy observation
\begin{align} \label{eq:ip}
	\data = \kernel^\star \ast \signal + \noise \,,
\end{align}
where $\kernel^\star \colon \R^d \to \R$ is the unknown convolution kernel and $\noise$ is the unknown noise. Since the kernel needs to be at least partially estimated along with the recovery of the original image, the problem \eqref{eq:ip} is severely underdetermined and ill-posed. Thus, strong additional assumptions on the original image and the kernel are required for a proper solution. While classical variational methods have been developed for this purpose \cite{chan1998total,lv2018convex,kundur1996blind,marquina2009nonlinear}, in this work we introduce a novel unsupervised learning approach to tackle this challenging problem.

Among others, blind deconvolution is relevant in medical imaging, astronomy, microscopy, signal processing, radar imaging, remote sensing, and computer vision (see \cite{kundur1996blind} and references therein). In astronomy, for example, blind deconvolution is used to improve the resolution of images obtained by telescopes. In medical imaging, it is used to remove blur caused by motion or the imaging process. In microscopy, the image of a specimen is often blurred due to imperfections in the optics, aberrations, and scattering of light. Non-blur deconvolution is simpler and better understood, but requires precise knowledge of the blurring kernel. In the above applications, however, the kernel is often at least partially unknown.

\subsection{Prior work}

Blind deconvolution has been studied extensively for several decades. Due to its highly ill-posed nature, blind deconvolution is usually approached with strong additional assumptions on $\signal$ and $\kernel^\star$ (see \cite{cho2009fast, gong2016blind, krishnan2009fast, krishnan2011blind, levin2009understanding, levin2011efficient, shan2008high, pan2016blind, wipf2014revisiting, xu2010two, xu2013unnatural,lv2018convex}). Typically, $\signal$ is assumed to have a sparse representation in some basis or dictionary, and $\kernel^\star$ is assumed to be a smooth function or to have a compact support. In general, variational regularization \cite{scherzer2009variational} provides a general framework for integrating prior information in blind deconvolution. However, they typically use hand-crafted priors that are not well represented in natural images. They also require time-consuming iterative minimization and may suffer from local minima.

Deep learning based approaches have recently shown excellent results for blind deconvolution tasks \cite{chakrabarti2016neural, gong2017motion, kupyn2018deblurgan, nah2017deep, noroozi2017motion, ramakrishnan2017deep, sun2015learning, schuler2015learning, tao2018scale, zhang2018dynamic}. Most of these approaches are supervised and require a paired dataset of original and blurred images. Existing methods fall into two broad categories: Two-step approaches with kernel estimation in a first step, and end-to-end approaches for estimating only the $\signal$. See \cite{zhang2022deep} for a detailed review of such supervised learning approaches in blind deconvolution. Paired data however is in many cases difficult to obtain. 

In contrast, unsupervised methods overcome the challenging problem of collecting corresponding pairs of original and blurred images. For example, in \cite{ren2020neural} a method inspired by the principles of Deep Image Prior (DIP) and Double-DIP \cite{gandelsman2019double,baguer2020computed,ulyanov2018deep} is presented. In \cite{li2022supervised}, a method is proposed to approximate the maximum a posteriori estimate of the blurring kernel using Monte Carlo sampling. Another approach \cite{lu2019unsupervised} is based on Generative Adversarial Networks (GAN) \cite{creswell2018generative}. While GAN-based methods suffer from the instability of GAN training, the DIP-based method suffers from long inference time and sensitivity to hyperparameters.

\subsection{Main contributions}

In this paper, we present a novel simple unsupervised learning approach to blind deconvolution that overcomes the above problems. Specifically, we consider blind deconvolution with an unknown kernel $\kernel^\star$ when separate unpaired collections of original and blurred images are given. It is proposed to estimate an unknown kernel $\kernel^\star$ and the reconstruction operator $\rec^\star \colon \data \to \signal$ via unsupervised learning by comparing expectations of the Fourier transform of given seperate unpaired collections of original and blurred images along with a cycle consistency term. Our method is particularly useful in various applications where the acquisition of a paired dataset is challenging. As our main contribution, we motivate and introduce a novel loss function that estimates the kernel and reconstruction operator. The loss function includes a cycle constancy term, which requires $\rec^\star$ to be a near-inverse of the convolution with $\kernel^\star$, and an expectation calibration term to constrain $\kernel^\star$. We refer to the proposed method as ECALL (Expectation-CALibrated Learning). We present numerical results that demonstrate the success of ECALL.

\subsection{Outline}

The rest of the paper is organized as follows: In Section~\ref{sec:method} we present the problem formulation and our proposed unsupervised learning method ECALL for blind deconvolution. In particular, the design of the proposed loss function is presented in detail. Numerical aspects and numerical results are presented in Section~\ref{sec:numerics}. The paper ends with conclusions and an outlook on future work presented in Section~\ref{sec:conclusion}.

\section{Proposed ECALL method}
 \label{sec:method}

\subsection{Problem formulation}

We consider the blind deconvolution problem \eqref{eq:ip} in the case when $\signal$ and $\noise$ are independent random variables in $L^2(\mathbb{R}^d)$ modelling original images and noise, respectively, $\kernel^\star \in L^1(\mathbb{R}^d)$ is a deterministic kernel and $\data$ are the given data representing noisy blurred images. We assume the noise to have zero mean. We write $\fourier$ and $\fourier^{-1}$ for the Fourier transform on $\R^d$ and its inverse, and write $\ew[\cdot]$ for the expectation. At some places we will use the abbreviation $\widehat \signal \coloneqq \fourier (\signal)$. We assume that the convolution is well defined by $ \fourier(\kernel^\star \ast \signal) = \fourier(\kernel^\star) \cdot \fourier (\signal)$ and given by point-wise multiplication in Fourier space.

Let $p_\signal$, $p_{\data}$ and $p_\noise$ be the distributions of $\signal$, $\data$ and $\noise$, respectively. Our goal is to find the kernel $\kernel^\star$ together with a reconstruction operator $\rec^\star \colon \data \to \signal$ mapping  noisy blurred images to original ones based on \eqref{eq:ip}. More precisely, we consider the following problem.

\begin{problem}[Unsupervised blind deconvolution] \label{prob:BID}
For given distributions $p_\signal$, $p_{\data}$ and $p_\noise$ subject to model \eqref{eq:ip} determine the kernel $\kernel^\star$ and the reconstruction operator 
\begin{equation} \label{eq:rec}
	 \rec^\star \coloneqq \argmin_\rec \ew \bigl[ \norm{ \rec(\kernel^\star \ast \signal + \noise) - \signal}^2 \bigr] \,.
\end{equation}
\end{problem}

Since, in practice, only a finite date set can be collected, we will actually address the following empirical version.

\begin{problem}[Empirical unsupervised blind deconvolution]\label{prob:BID-E}
For given unpaired collections of original images $(\signal_i)_{i =1, \dots, N} \sim p_\signal$, data $(\data_i)_{i =1, \dots, N} \sim p_{\data}$ and noises $(\noise_i)_{i =1, \dots, N} \sim p_\noise$, estimate the kernel $\kernel^\star$ and the reconstruction operator $\rec^\star$ defined by \eqref{eq:rec}.
\end{problem}

Note that in Problem \ref{prob:BID-E} we assume no dependence among $(\signal_i)_{i=1}^N$, $(\data_i)_{i=1}^N$ and $(\noise_i)_{i=1}^N$.

\begin{remark}
In the supervised learning setting \cite{vapnik1999nature} the ideal reconstruction operator $\rec^\star$ is given as the minimizer of the expected loss $ \ew [ \norm{ \rec(\data) - \signal}^2 ] $. It implementation, however, requires paired data, which we don't have. If $\kernel^\star$ would be known, we could create paired data using samples of $\signal$ and the model $\kernel^\star \ast \signal + \noise$. Our strategy will therefore include kernel estimation, which will then allow the creation of virtual supervised training data based on the kernel estimate. 
\end{remark}

To solve Problem~\ref{prob:BID}, we set up an expected loss functional that determines $\kernel^\ast$ based on $p_\signal$ and $p_{\data}$ (expectation calibration) and $\rec^\ast$ based on virtual supervised data using the estimated kernel. To stabilize the whole process, we integrate several additional terms. In particular, the joint estimation of $\kernel^\star$ and $\rec^\star$ improves the estimation of each. Problem \ref{prob:BID-E} then uses the empirical version of the expected loss together with specific network architectures for approximating $(k^\star, \rec^\star)$.

\subsection{Theory}
\label{sec:noise-free}

Let us start with the theoretical Problem~\ref{prob:BID}. Our ECALL method is motivated by the following simple result, which constitutes a constructive uniqueness theorem for the problem of determining $(k^\star, \rec^\star)$.

\begin{theorem}[Uniqueness results] \label{thm:main}
Suppose $\ew[ \widehat \signal] (\xi) \neq 0$ for almost every $ \xi \in \supp(\widehat {\kernel^\star})$. Then $(\kernel^\star, \rec^\star)$ is the unique solution of 
\begin{align} \label{eq:determine1}
 \kernel^\star &= \argmin_\kernel 
 \norm{\ew [ \widehat \data ] - \ew[ \mathcal{F} (\kernel \ast \signal) ] }_1 
 \\ \label{eq:determine2}
 \rec^\star &= \argmin_\rec 
 \ew \norm { \signal - \rec( \kernel^\star \ast \signal + \delta) }^2_2 \,.
\end{align}
\end{theorem}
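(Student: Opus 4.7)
The plan is to reduce both minimization problems to standard facts once the expectations are computed in Fourier space. The second identity \eqref{eq:determine2} is literally the defining equation \eqref{eq:rec} of $\rec^\star$, so there is nothing to prove there beyond noting this. The real content is \eqref{eq:determine1}, which I would tackle as follows.

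First I would apply the Fourier transform to the data model \eqref{eq:ip} to obtain $\widehat \data = \widehat{\kernel^\star} \cdot \widehat \signal + \widehat \noise$. Using linearity of the Fourier transform and of the expectation together with the assumption $\ew[\noise] = 0$ (and hence $\ew[\widehat\noise] = 0$), the measurement side collapses to $\ew[\widehat \data] = \widehat{\kernel^\star}\cdot \ew[\widehat\signal]$. The same computation applied to a candidate kernel $\kernel$ gives $\ew[\fourier(\kernel \ast \signal)] = \widehat \kernel \cdot \ew[\widehat \signal]$. Substituting these into the objective yields
\begin{equation*}
  \norm{\ew[\widehat \data] - \ew[\fourier(\kernel \ast \signal)]}_1
  \;=\; \mnorm{(\widehat{\kernel^\star} - \widehat \kernel)\cdot \ew[\widehat \signal]}_1 \,.
\end{equation*}

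Since this expression is non-negative and vanishes at $\kernel = \kernel^\star$, the true kernel is a minimizer. For uniqueness, if $\kernel$ is any other minimizer, then $(\widehat{\kernel^\star}(\xi) - \widehat \kernel(\xi))\,\ew[\widehat \signal](\xi) = 0$ for almost every $\xi$. I would invoke the non-vanishing hypothesis $\ew[\widehat \signal](\xi) \neq 0$ for a.e.\ $\xi \in \supp(\widehat{\kernel^\star})$ to conclude that $\widehat\kernel = \widehat{\kernel^\star}$ almost everywhere on $\supp(\widehat{\kernel^\star})$, and hence (via injectivity of $\fourier$ on the relevant class) $\kernel = \kernel^\star$ in the sense relevant to the blind deconvolution problem.

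The main obstacle I anticipate is precisely this last step: the hypothesis controls $\ew[\widehat\signal]$ only on $\supp(\widehat{\kernel^\star})$, so the argument as stated pins down $\widehat\kernel$ only there. Off that support one either needs an implicit restriction of the admissible class (for instance $\supp(\widehat\kernel) \subseteq \supp(\widehat{\kernel^\star})$, which is natural if one is trying to identify the blurring operator), or one interprets uniqueness modulo the equivalence that identifies kernels inducing the same convolution on samples from $p_\signal$. I would flag this subtlety and state the uniqueness conclusion in whichever of these senses the authors intend; the rest of the argument is a short unpacking of definitions and linearity.
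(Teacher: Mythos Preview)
Your proposal is correct and follows exactly the paper's route: apply the convolution theorem to \eqref{eq:ip}, take expectations using $\ew[\noise]=0$ and the determinism of $\kernel^\star$ to identify the minimizer in \eqref{eq:determine1}, and observe that \eqref{eq:determine2} is the definition of $\rec^\star$. Your write-up is in fact more detailed than the paper's, and the subtlety you flag about uniqueness off $\supp(\widehat{\kernel^\star})$ is real and left implicit in the original proof as well.
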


\begin{proof}
By the convolution theorem and equation \eqref{eq:ip} we have $\widehat\data = \widehat{\kernel^\star} \cdot \widehat \signal + \widehat \noise$. By applying expectation values, using that $\kernel^\star$ is a deterministic quantity, and that the noise has zero mean and is independent of the clean image, we obtain \eqref{eq:determine1}. Having determined the kernel $\kernel^\star$, identity \eqref{eq:determine2} is then simply the definition of $\rec^\star$. 
\end{proof}

Based on Theorem \ref{thm:main}, we set up a loss function that includes terms for estimating the kernel $\kernel^\star$ and terms for estimating the reconstruction operator $\rec^\star$. To increase accuracy and stability, we introduce regularization.
More specifically, we consider 
\begin{equation} \label{eq:loss}
\loss (\kernel, \rec) =
\lossA (\kernel)
+
\lossB (\kernel, \rec)
+
\lossC (\kernel, \rec)
\end{equation}
where the three terms $\lossA$, $\lossB$, $\lossC$ are described next.

\begin{enumerate}[label=(\Alph*), topsep = 0em, leftmargin = 3em]
\item 
\textsc{Expectation calibration:}
 Based~on \eqref{eq:determine1} we introduce a term for determining the kernel $\kernel^\star$. Under the assumption that $\ew[\widehat \signal] (\xi) \neq 0$, minimizing the term $\norm{\ew [ \widehat \data ] - \ew[ \mathcal{F} (\kernel \ast \signal + \delta) ] }_1$ is theoretically sufficient for that purpose. However, when $\ew[\widehat \signal] $ is close to zero, then the estimation becomes unstable. We thus add a second term involving expected values of Fourier magnitudes. Thus we take 
 \begin{equation}
\label{eq:LA}
\lossA(\kernel)
=
\la_{A,1} \mnorm{ \ew [ \widehat\data ] - \ew [ \mathcal{F}( \kernel \ast \signal + \noise) ] }_1
+ \la_{A,2} \mnorm{ \ew [ \abs{\widehat\data }] - \ew [ \abs{\mathcal{F}(\kernel \ast \signal+ \noise) }] }_1 \,.
\end{equation}
 Loss \eqref{eq:LA} is the main term allowing to estimate the unknown convolution kernel. While theoretically the first term would be sufficient the second term turns out to significantly increase stability.

\item 
\textsc{Cycle consistency:}
Based on \eqref{eq:determine2} we construct a term to obtain the reconstruction operator $\rec^\star$. 
If the exact kernel $\kernel = \kernel^\star$ would be known, the term $\ew [ \norm{\signal - \Ro (\kernel \ast \signal + \noise)}^2] $ would be sufficient for that purpose. However, as the kernel is estimated simultaneously with the reconstruction operator we found that adding $\ew [ \norm{\data - \kernel \ast (\Ro \data)}^2]$ significantly improves results and, in particular, also stabilizes the kernel estimation. Thus we consider the term 
\begin{equation} \label{eq:LB}
\lossB(\kernel, \rec) =
\lambda_{B,1} \ew [ \norm{\data - \kernel \ast (\Ro \data)}^2] +
\lambda_{B,2} \ew [ \norm{\signal - \Ro(\kernel \ast \signal + \noise ) }^2] \,.
\end{equation}
It implements the reconstruction property \eqref{eq:determine2} together with data consistency. Making $\lossB$ small requires $ \kernel \ast (\Ro \data) \simeq \data$ and $ \Ro (\kernel \ast \signal) \simeq \signal$ and thus resembles the cycle loss commonly considered for unpaired image-to-image translation \cite{zhu2017unpaired}.

\item 
\textsc{Regularization:}
To avoid overfitting, we consider simple $L^2$-regularization $\norm{\kernel}_2^2$. To stabilize estimating the reconstruction operator, we add the difference between the expectations of $\signal$ and $\Ro\data$ resulting in 
\begin{equation} \label{eq:LC}
\lossC(\kernel, \rec) = \la_{C,1} \norm{\kernel}_2^2 + \la_{C,2} \norm{\ew[\Ro \data] - \ew[\signal] }_2^2 \,.
\end{equation}
Together with the expectation calibration \eqref{eq:LA} and cycle consistency \eqref{eq:LB} the regularization term \eqref{eq:LC} forms the loss function \eqref{eq:loss}.
\end{enumerate}

\subsection{Empirical estimation}

Next we turn over to the practically more important Problem~\ref{prob:BID-E} of blind deconvolution using empirical data $(\signal_i)$, $(\data_i^\noise)$ and $(\noise_i)$. For that purpose we replace all expectation values in \eqref{eq:loss} by empirical counterparts. The resulting functional is minimized over a parameterized class of convolution kernels $(\kernel_w)_{w \in W}$ for the kernel and a convolutional neural network $(\rec_\theta)_{\theta \in \Theta}$ for the reconstruction operator. Thus we consider 
\begin{equation} \label{eq:E-loss}
\loss^N (w, \theta) =
\lossA^N (w)
+
\lossB^N (w, \theta)
+
\lossC^N (w, \theta)
\end{equation}
where
\begin{align} \nonumber
\lossA^N (w)&=
\frac{\la_{A,1}}{N}
\Bigl\lVert \sum_{i=1}^N \widehat{\data_i^\delta} - \sum_{i=1}^N \mathcal{F}(\kernel_w \ast \signal_i + \delta_{\sigma(i)}) \Bigr\rVert_1
 \\ \label{eq:E-LA} &\qquad\qquad +
\frac{\la_{A,2}}{N}
\Bigl\lVert \sum_{i=1}^N \abs{ \widehat{\data_i^\delta}} - \sum_{i=1}^N \abs{ \mathcal{F}(\kernel_w \ast \signal_i + \delta_{\sigma(i)}) } \Bigr\rVert_1
 \\ \nonumber
\lossB^N (w, \theta) &=
 \frac{\lambda_{B,1}}{N} \sum_{i=1}^N 
 \Bigl\lVert \data_i^\delta - \kernel_w \ast ( \Ro_\theta \data_i^\delta) \Bigr\rVert_2^2
 \\ \label{eq:E-LB} &\qquad 
 + \frac{\lambda_{B,2}}{N}\sum_{i=1}^N 
 \Bigl\lVert \signal_i - \Ro_\theta (\kernel_w \ast \signal_i + \delta_{\sigma(i)})\Bigr\rVert_2^2
 \\ \label{eq:E-LC} 
\lossC^N (w, \theta) &= \la_{C,1} \norm{w}_2^2 + 
\frac{\la_{C,2}}{N} \Bigl\lVert \sum_{i=1}^N \signal_i - \sum_{i=1}^N \Ro_\theta(\data_i^\delta)\Bigr\rVert_2^2 \,.
\end{align}
Here $(\delta_{\sigma(i)})$ denotes random sampling from the given collection of noises.

Details on the implementation and the network architecture are given below.

\section{Numerical simulations}
\label{sec:numerics}

In this section, we present details of implementation and experimental results. We will work with color images with size of $256 \times 256$ having three color channels. The convolution kernel $\kernel^\star$ is approximated by a parameterised family of convolutions $(\kernel_w)_{w \in W}$ realised as CNN with one convolution layer with kernel size $31 \times 31$. The entries of the kernel are taken as parameters of the network. The reconstruction operator $\rec^\star$ is approximated by the U-Net architecture $(\Ro_\theta)_{\theta \in \Theta}$ which has proven to be highly effective for a variety of image processing tasks \cite{ronneberger2015u}. The U-Net structure used in our study is shown in Figure \ref{fig:unet_design}.

 \begin{figure}[htb!]
 \centering
 \includegraphics[width=0.9\textwidth]{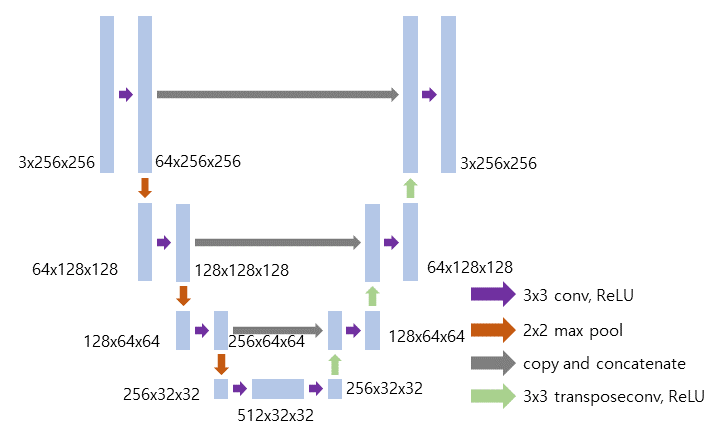}
 \caption{U-net architecture $(\Ro_\theta)_{\theta \in \Theta}$ for representing the reconstruction operator $\rec^\star$ used in all presented numerical results.}
 \label{fig:unet_design}
\end{figure}

\subsection{Datasets and data generation}

We perform experiments using $3*10^3$ images of the FFHQ dataset \cite{karras2019style} where $10^3$ samples are taken as original images, other $10^3$ images to generate data samples and another $10^3$ images as taken as test set.

Blurry images are created by applying three Gaussian filters, referred to as Broad, Medium, and Narrow. The window size of each Gaussian filter is truncated to have size $31$. The Gaussian kernels are centered at zero and have different standard deviation, which determines the degree of blurring applied to the image. The Broad filter has the smallest standard deviation of $0.5$, which will result in a mild blurring effect. The Medium filter has a standard deviation of $1$, which will result in a moderate blurring effect. Finally, the Narrow filter has the largest standard deviation of $2$, which will result in a strong blurring effect. For the noisy data, we add additive Gaussian white noise with standard deviation equal to $1\%$ of the maximal value of the original data. The Fourier transform in the loss function is realized by the FFT applied with periodic padding \cite{liu2008reducing}.

\subsection{Training}

For minimizing \eqref{eq:E-loss} we use the AdamW optimizer \cite{loshchilov2017decoupled} which is a stochastic optimization method that modifies the conventional weight decay implementation in the Adam \cite{kingma2014adam} by decoupling weight decay from the gradient update process. The batch size for the neural network $\kernel_w$ is taken as $10^3$. The large batch helps to evaluate and update the expectation effectively. We train the neural network $\Ro_\theta$ using a batch size of 2, which helps to reduce the gradient variance and improve model generalization.

The specific training strategy based on \eqref{eq:E-loss} consists of three phases: 
\begin{enumerate}[label = {[}Phase \arabic*{]}, leftmargin=5em, topsep=0em]
 \item \label{phase1} We first train the kernel $\kernel_w$ with the objective function \eqref{eq:E-loss} using coefficients $\lambda_{A,1}=1$, $\lambda_{C,1}=5$ and $\lambda_{A,2} = \lambda_{B,1} = \lambda_{B,2} = \lambda_{C,2}=0$ with the learning rate of $10^{-3}$ for $\kernel_w$. The iteration number is $10^3$.
 
 \item\label{phase2} Train the neural networks $\kernel_w$ and $\Ro_\theta$ with the objective function \eqref{eq:E-loss} which the coefficients $\lambda_{A,1} = \lambda_{A,2}=10 $, $\lambda_{B,1} = \lambda_{B,2}=1$, $\lambda_{C,1}=5$ and $\lambda_{C,2}=10$ with the learning rate of $10^{-4}$ for $\kernel_w$ and $10^{-3}$ for $\Ro_\theta$. The iteration number is $10^4$.
 
 \item\label{phase3} In the final fine-tuning phase we train the neural network $\Ro_\theta$ with the objective function \eqref{eq:E-loss} with $\lambda_{A,1} = \lambda_{A,2} = \lambda_{B,1} = \lambda_{C,1} = 0$, $ \lambda_{B,2}=1$ and $\lambda_{C,2}=10$ with the learning rate of $10^{-3}$ for $\Ro_\theta$. The iteration number is $10^4$.
\end{enumerate}

To prevent getting stuck in a local minimum, we modify $\lossA^N$ as 
\begin{multline} 
\lossA^N (w)=
\frac{\la_{A,1}}{N}
\Bigl\lVert \sum_{i=1}^N \chi \odot \widehat{\data_i^\delta} - \sum_{i=1}^N \chi \odot \mathcal{F}(\kernel_w \ast \signal_i + \delta_{\sigma(i)}) \Bigr\rVert_1
 \\ \label{eq:E-LA-M} 
 +
 \frac{\la_{A,2}}{N}
\Bigl\lVert \sum_{i=1}^N \abs{ \chi \odot \widehat{\data_i^\delta}} - \sum_{i=1}^N \abs{ \chi \odot \mathcal{F}(\kernel_w \ast \signal_i + \delta_{\sigma(i)}) } \Bigr\rVert_1
\end{multline}
where $\chi$ is a random mask that sets $20\%$ of the pixels to zero and keeps the other pixels unchanged, and $\odot$ denotes the Hadamard product.

\subsection{Results}

The experiments for proposed unsupervised blind deconvolution are made with noiseless data as well as noisy data. Kernel estimation is the most challenging part. Due to the empirical data this task in noiseless and noisy case suffers from inexact data; see \eqref{eq:E-LA}. For comparison purpose we also present results with supervised learning where we minimise the supervised loss 
\begin{multline*}
\mathcal{L}_{N,\mathrm{super}}(w, \theta) \coloneqq \frac{1}{N}\sum_{i=1}^N\left\|\kernel_w(\signal_i) - (\kernel^\star \ast \signal_i + \noise_i) \right\|_2^2 \\ 
+ \frac{1}{N}\sum_{i=1}^N\left\| \signal_i - \Ro_\theta(\kernel^\star \ast \signal_i + \noise_i)\right\|_2^2 + 5 \| \kernel_w \|_2^2 \,,
\end{multline*} 
using the same architectures as in the unsupervised case, learning rates of $10^{-4}$ and $10^{-3}$ for $\kernel_w$ and $\Ro_\theta$ respectively, and iteration number $2*10^{4}$. 

The estimated kernels in the supervised and unsupervised (ECALL) case are shown in Table~\ref{tbl:kernel}. Both methods are able to well recover the kernel especially the broad and medium kernel. Table \ref{tbl:test} shows typical example of corresponding blind deconvolution of the images. 
\begin{table}[H]
\centering
%\begin{adjustwidth}{-1cm}{}
\begin{tabular}{|m{2cm}|m{1.3cm}m{1.3cm}m{1.3cm}|m{1.3cm}m{1.3cm}m{1.3cm}|}
\toprule
& &\centering Noiseless & & &\centering Noisy & \tabularnewline
\midrule
 &\centering Broad &\centering Medium &\centering Narrow &\centering Broad &\centering Medium &\centering Narrow \tabularnewline
\midrule
Ground Truth & \includegraphics[width=1.3cm]{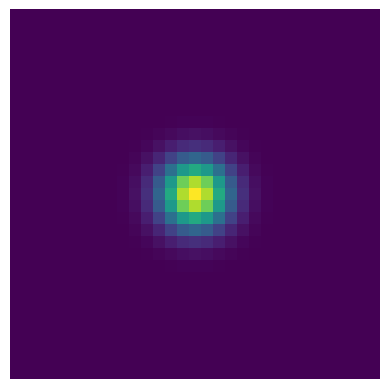} &
\includegraphics[width=1.3cm]{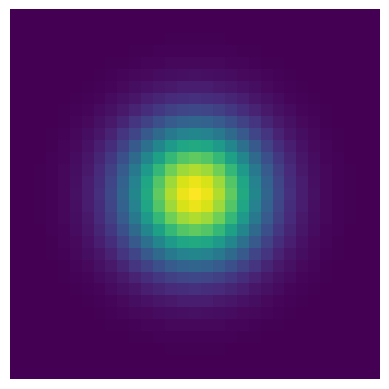} &
\includegraphics[width=1.3cm]{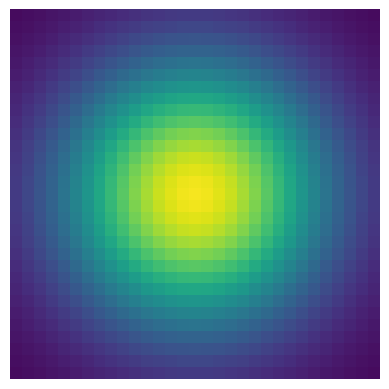} & \includegraphics[width=1.3cm]{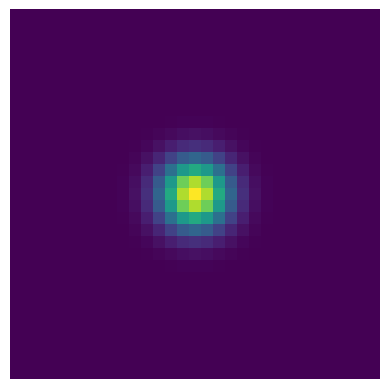} &
\includegraphics[width=1.3cm]{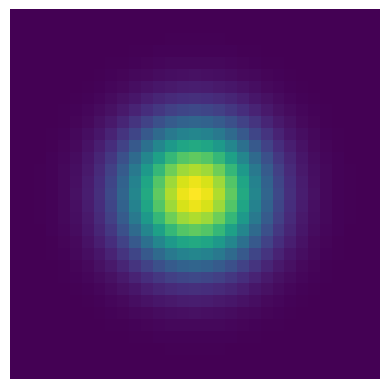} &
\includegraphics[width=1.3cm]{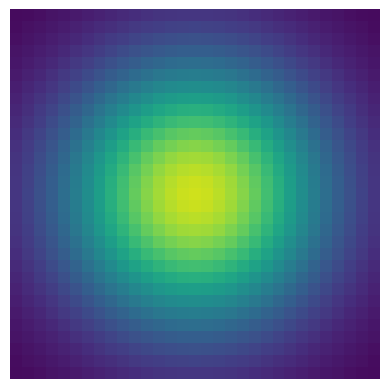} \\
Supervised& \includegraphics[width=1.3cm]{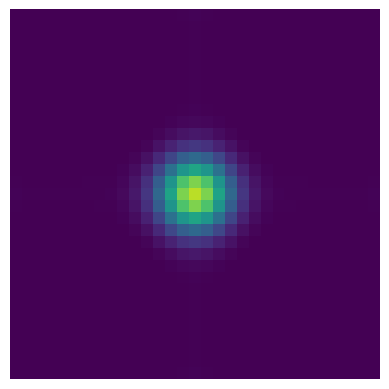} &
\includegraphics[width=1.3cm]{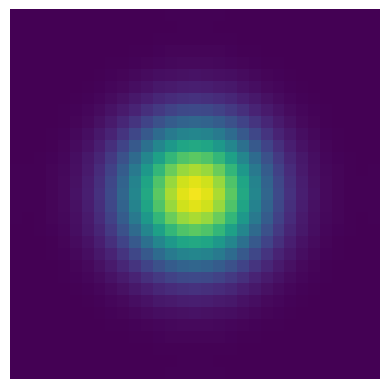} &
\includegraphics[width=1.3cm]{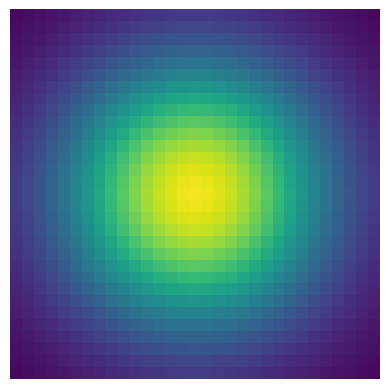} & \includegraphics[width=1.3cm]{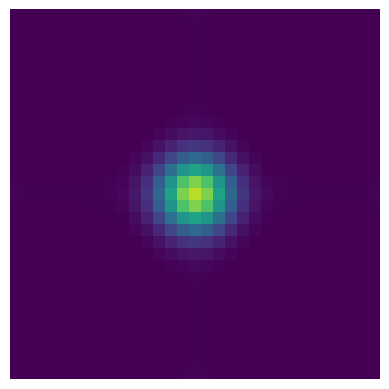} &
\includegraphics[width=1.3cm]{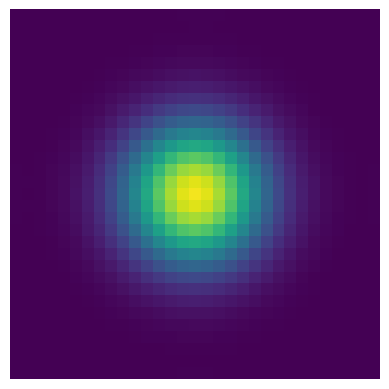} &
\includegraphics[width=1.3cm]{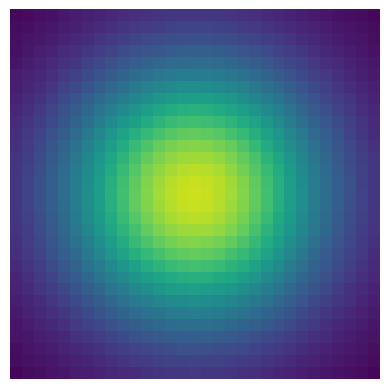}\\
ECALL & \includegraphics[width=1.3cm]{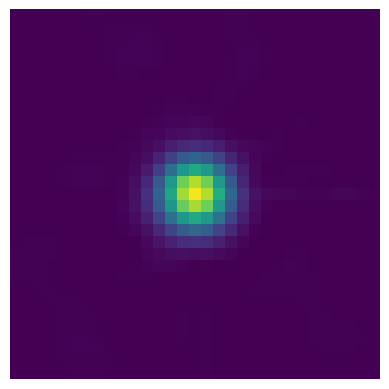} &
\includegraphics[width=1.3cm]{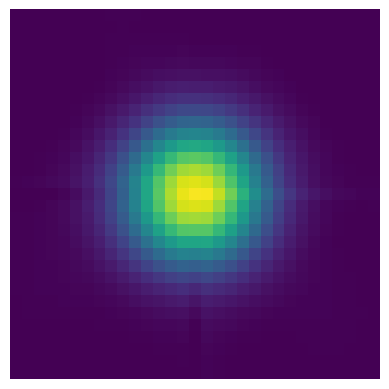} &
\includegraphics[width=1.3cm]{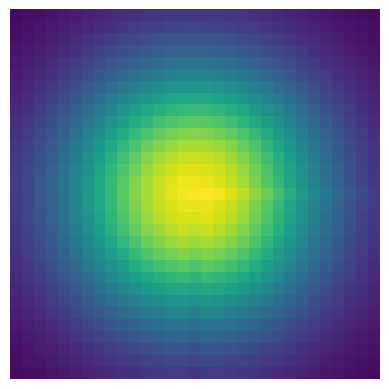} & \includegraphics[width=1.3cm]{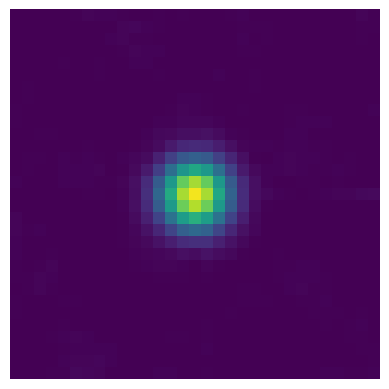} &
\includegraphics[width=1.3cm]{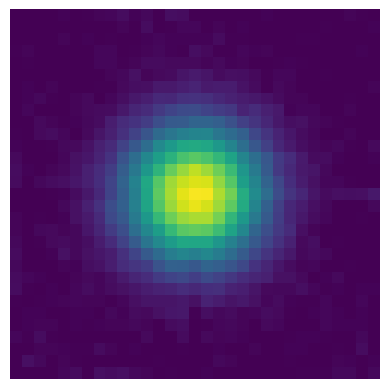} &
\includegraphics[width=1.3cm]{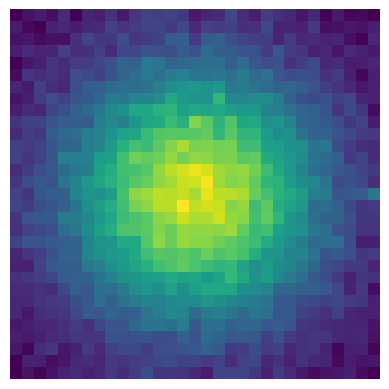}\\
& \includegraphics[width=1.3cm]{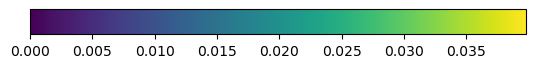} &
\includegraphics[width=1.3cm]{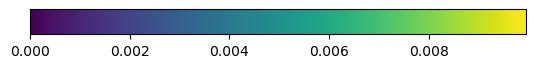} &
\includegraphics[width=1.3cm]{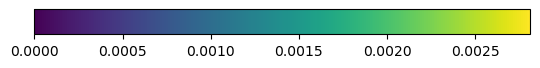}&
\includegraphics[width=1.3cm]{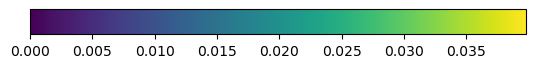} &
\includegraphics[width=1.3cm]{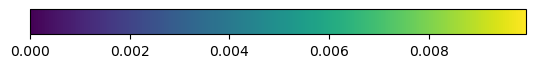} &
\includegraphics[width=1.3cm]{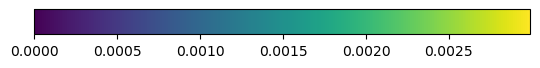}\\
\bottomrule
\end{tabular}
\caption{Ground truth kernels (top) and reconstructed kernels using the supervised (middle) and proposed unsupervised (bottom) learning.}
\label{tbl:kernel}
% \end{adjustwidth}
\end{table}

\begin{table}[H]
\centering
\makebox[\linewidth]{
\begin{tabular}{|m{2cm}|m{1.3cm}m{1.3cm}m{1.3cm}|m{1.3cm}m{1.3cm}m{1.3cm}|}
\toprule
& &\centering Noiseless & & &\centering Noisy & \tabularnewline
\midrule
&\centering Broad &\centering Medium &\centering Narrow &\centering Broad &\centering Medium &\centering Narrow\tabularnewline
\midrule
Original & \includegraphics[width=1.3cm]{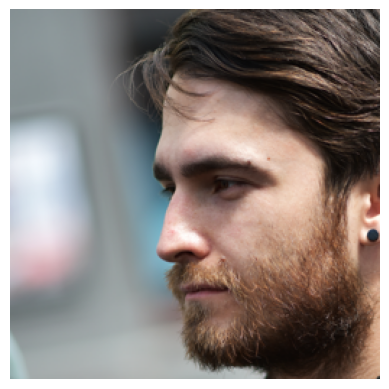} & \includegraphics[width=1.3cm]{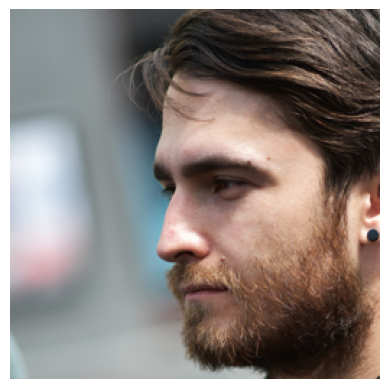} & \includegraphics[width=1.3cm]{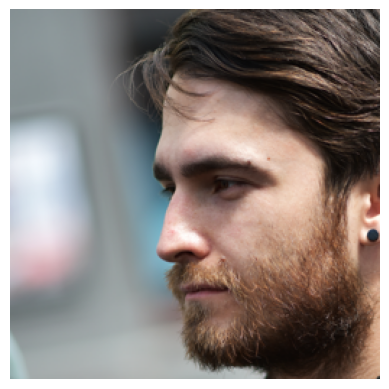} & \includegraphics[width=1.3cm]{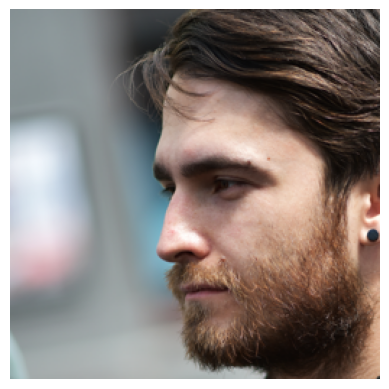} & \includegraphics[width=1.3cm]{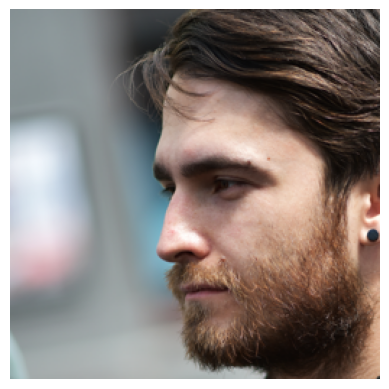} & \includegraphics[width=1.3cm]{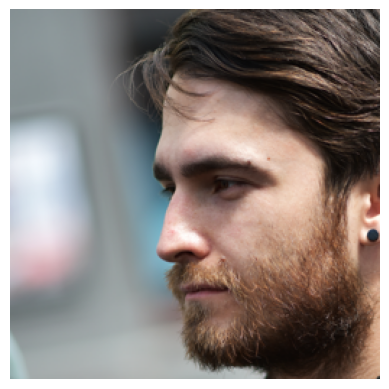}\\
Observed & \includegraphics[width=1.3cm]{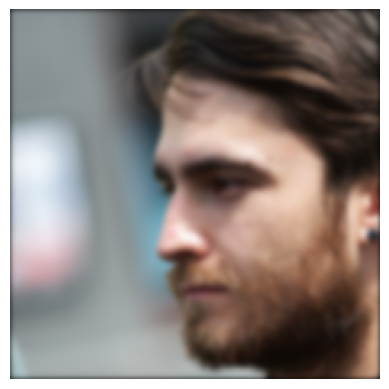} & \includegraphics[width=1.3cm]{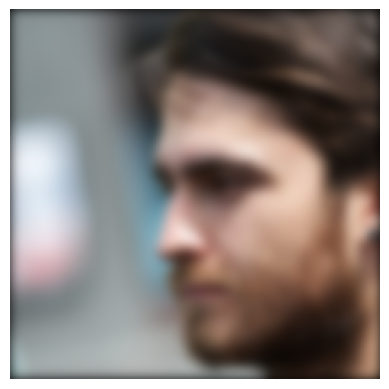} & \includegraphics[width=1.3cm]{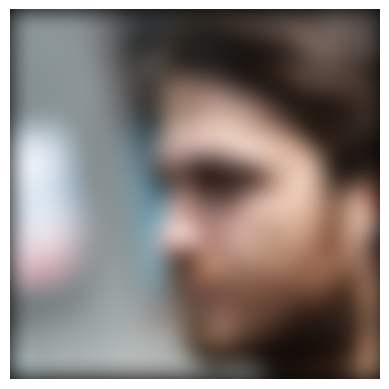} & \includegraphics[width=1.3cm]{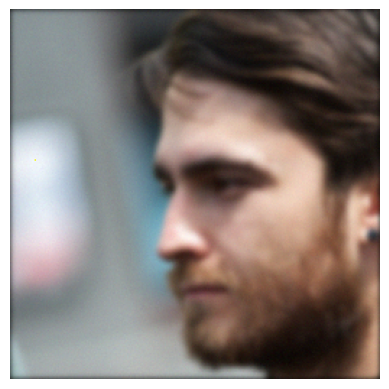} & \includegraphics[width=1.3cm]{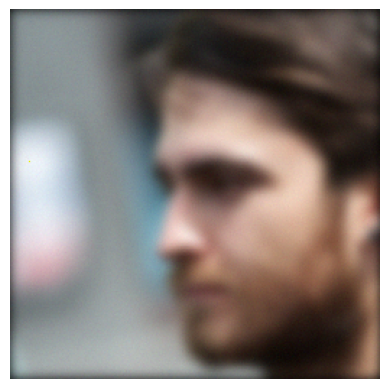} & \includegraphics[width=1.3cm]{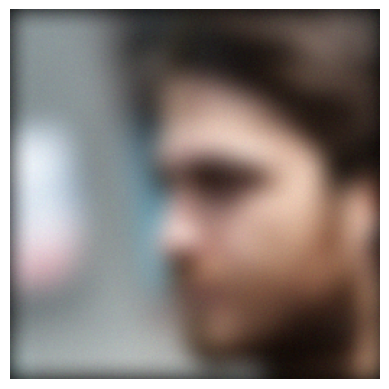}\\
Supervised & \includegraphics[width=1.3cm]{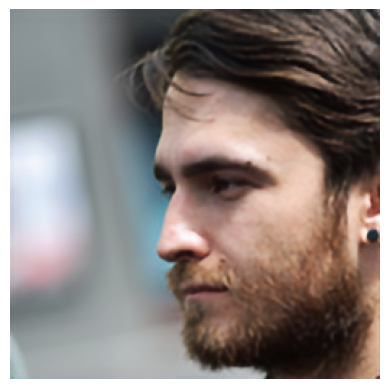} & \includegraphics[width=1.3cm]{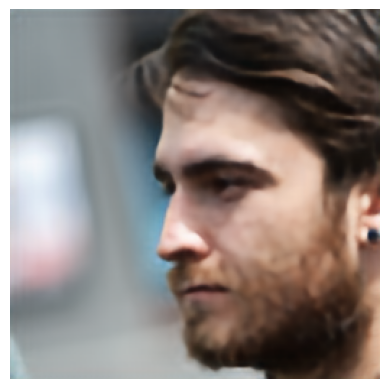} & \includegraphics[width=1.3cm]{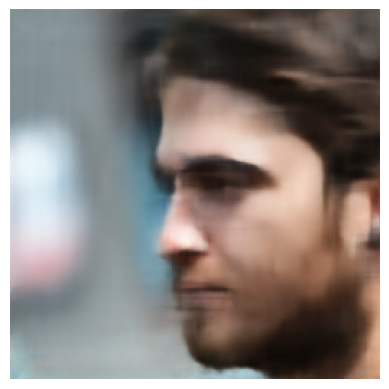} & \includegraphics[width=1.3cm]{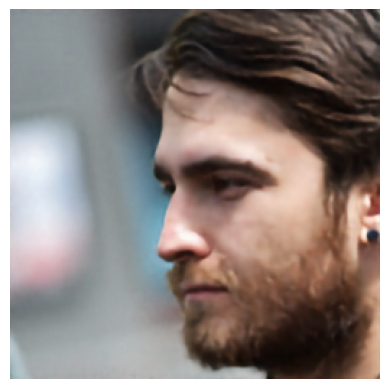} & \includegraphics[width=1.3cm]{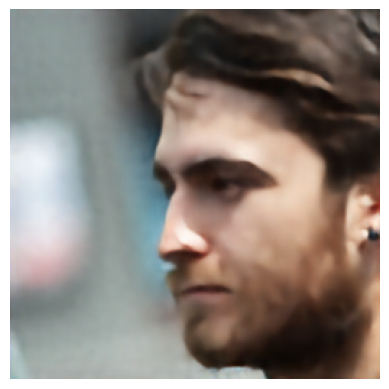} & \includegraphics[width=1.3cm]{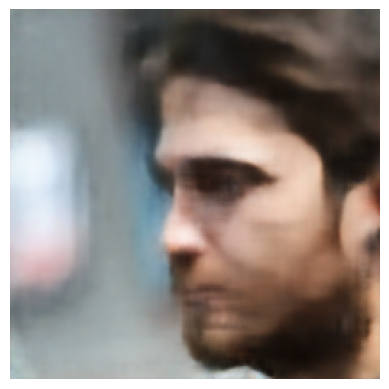}\\
ECALL & \includegraphics[width=1.3cm]{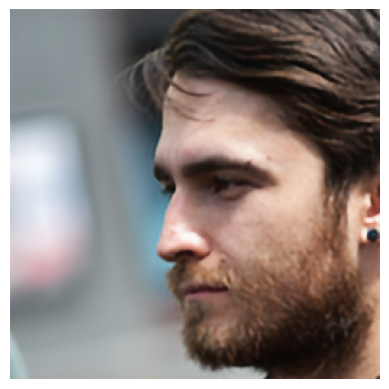} & \includegraphics[width=1.3cm]{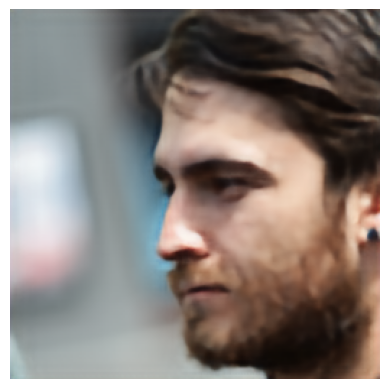} & \includegraphics[width=1.3cm]{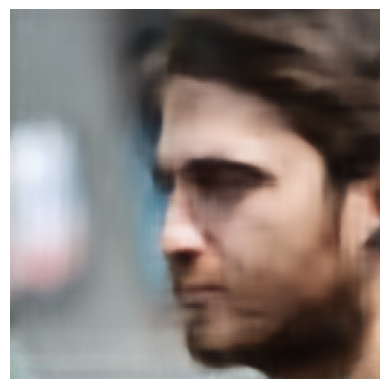} & \includegraphics[width=1.3cm]{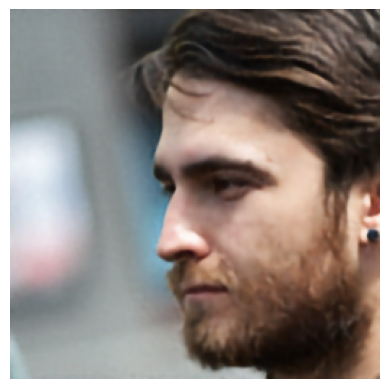} & \includegraphics[width=1.3cm]{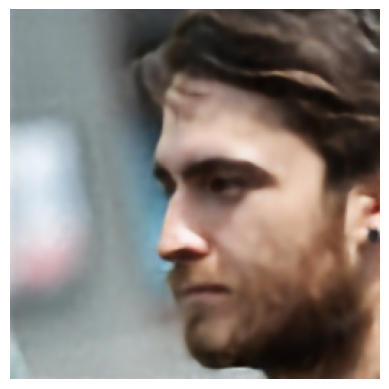} & \includegraphics[width=1.3cm]{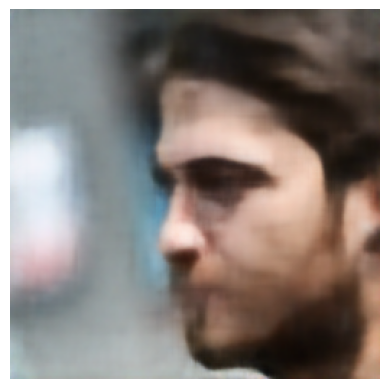} \\
\midrule
\end{tabular}}
\makebox[\linewidth]{
\begin{tabular}
{|m{2cm}|m{1.3cm}m{1.3cm}m{1.3cm}|m{1.3cm}m{1.3cm}m{1.3cm}|}
Original & \includegraphics[width=1.3cm]{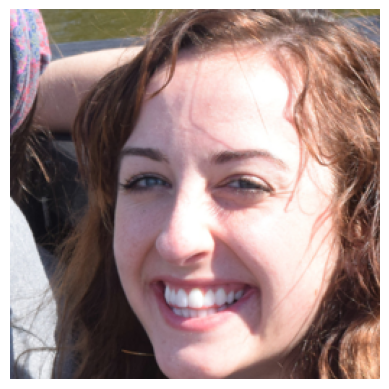} & \includegraphics[width=1.3cm]{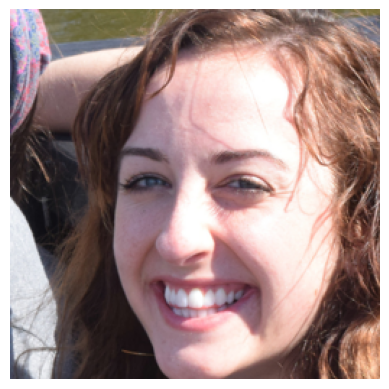} & \includegraphics[width=1.3cm]{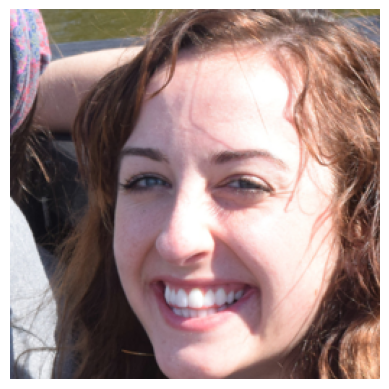} & \includegraphics[width=1.3cm]{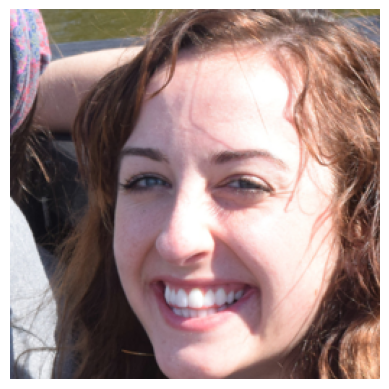} & \includegraphics[width=1.3cm]{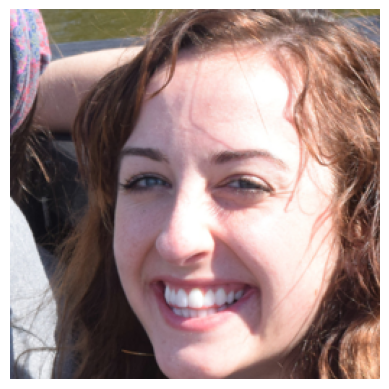} & \includegraphics[width=1.3cm]{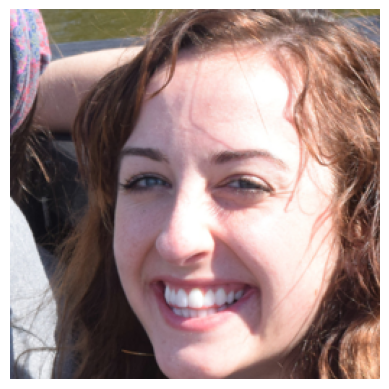}\\
Observed & \includegraphics[width=1.3cm]{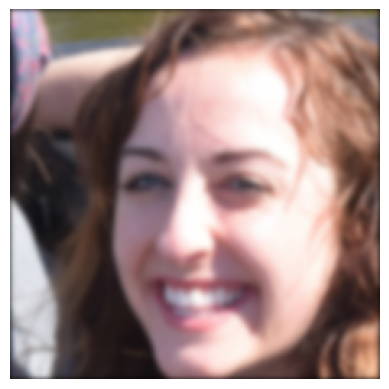} & \includegraphics[width=1.3cm]{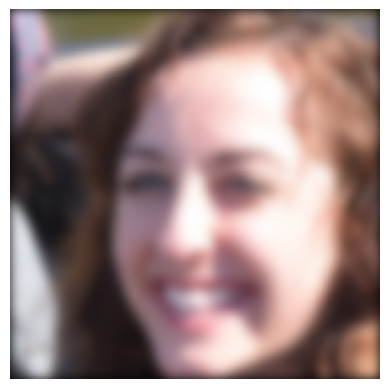} & \includegraphics[width=1.3cm]{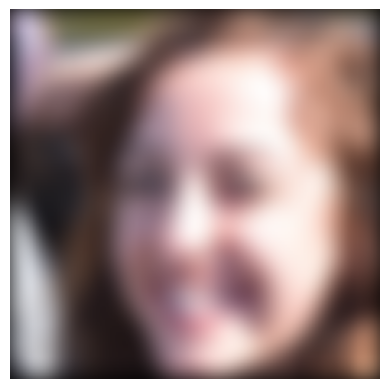} & \includegraphics[width=1.3cm]{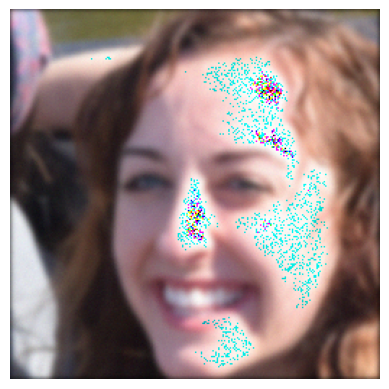} & \includegraphics[width=1.3cm]{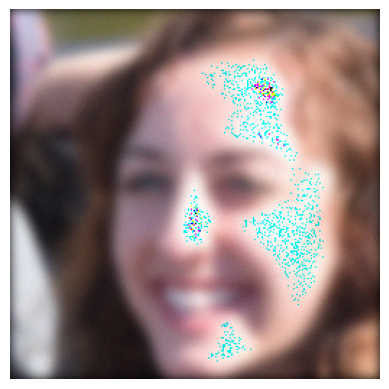} & \includegraphics[width=1.3cm]{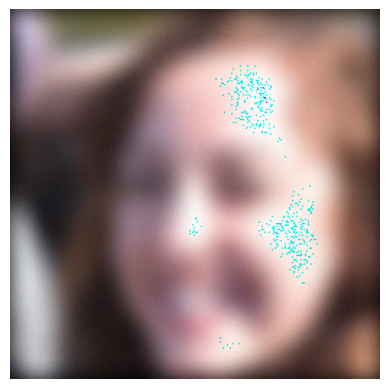}\\
Supervised & \includegraphics[width=1.3cm]{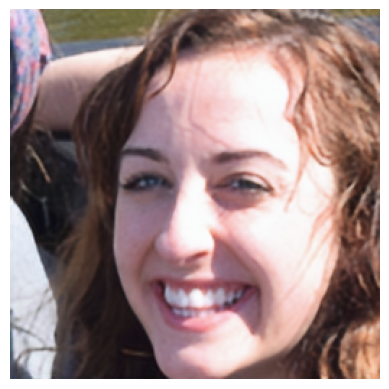} & \includegraphics[width=1.3cm]{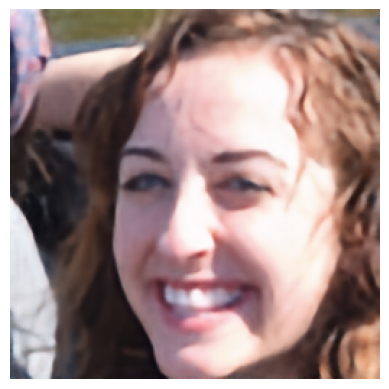} & \includegraphics[width=1.3cm]{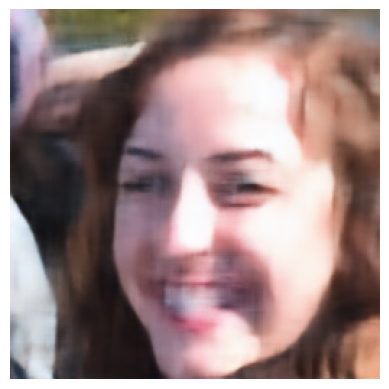} & \includegraphics[width=1.3cm]{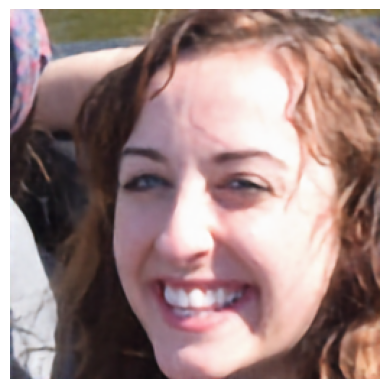} & \includegraphics[width=1.3cm]{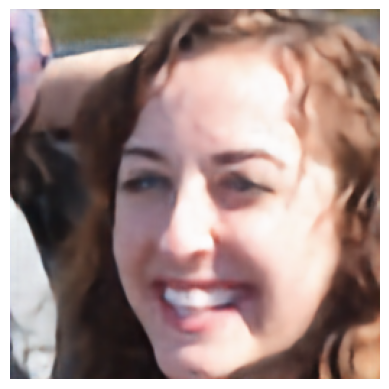} & \includegraphics[width=1.3cm]{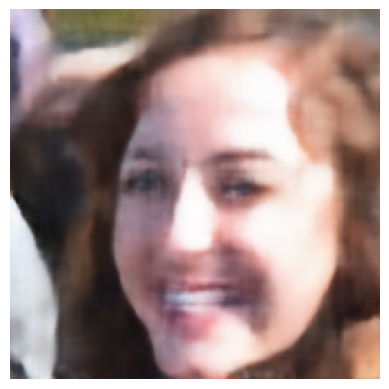}\\
ECALL & \includegraphics[width=1.3cm]{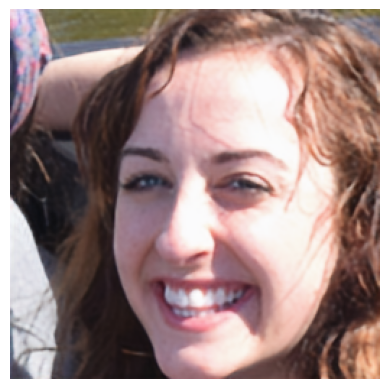} & \includegraphics[width=1.3cm]{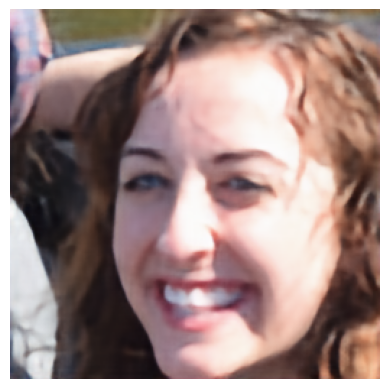} & \includegraphics[width=1.3cm]{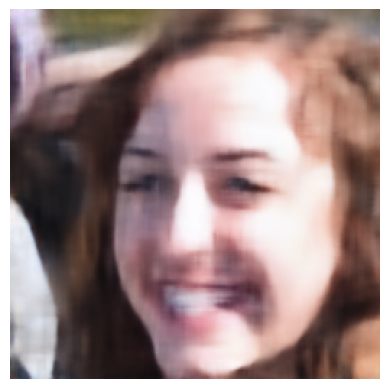} & \includegraphics[width=1.3cm]{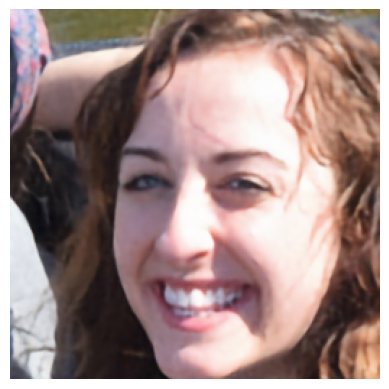} & \includegraphics[width=1.3cm]{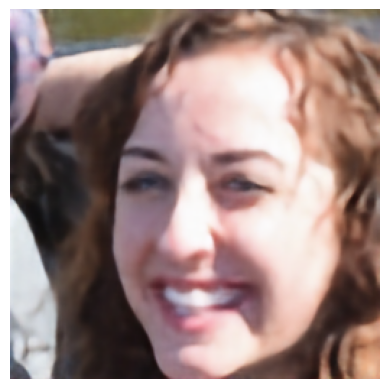} & \includegraphics[width=1.3cm]{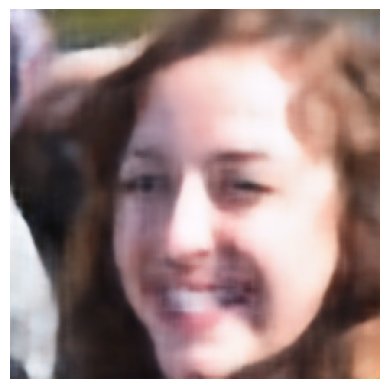} \\
\midrule
\end{tabular}}
\makebox[\linewidth]{
\begin{tabular}
{|m{2cm}|m{1.3cm}m{1.3cm}m{1.3cm}|m{1.3cm}m{1.3cm}m{1.3cm}|}
Original & \includegraphics[width=1.3cm]{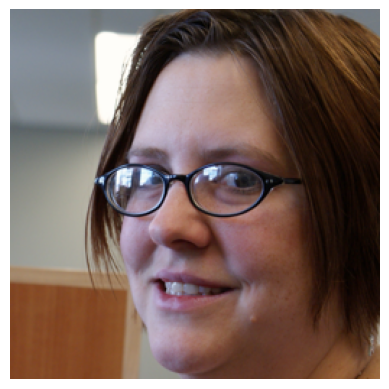} & \includegraphics[width=1.3cm]{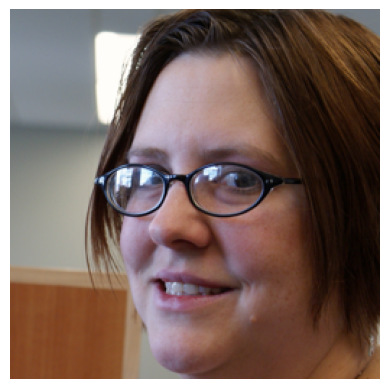} & \includegraphics[width=1.3cm]{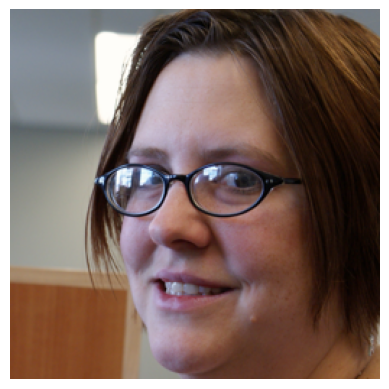} & \includegraphics[width=1.3cm]{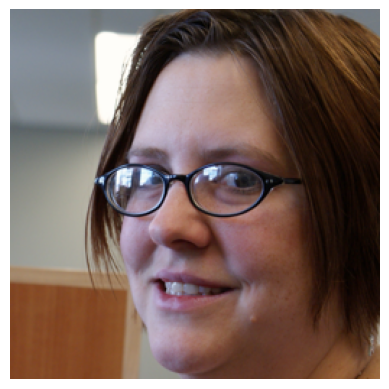} & \includegraphics[width=1.3cm]{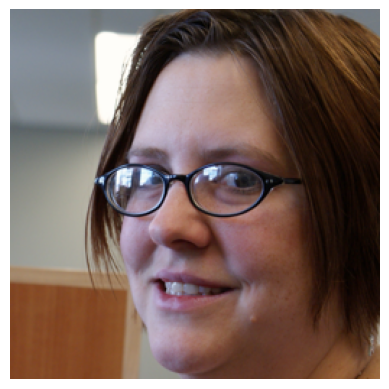} & \includegraphics[width=1.3cm]{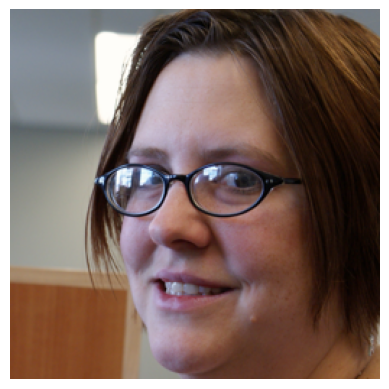}\\
Observed & \includegraphics[width=1.3cm]{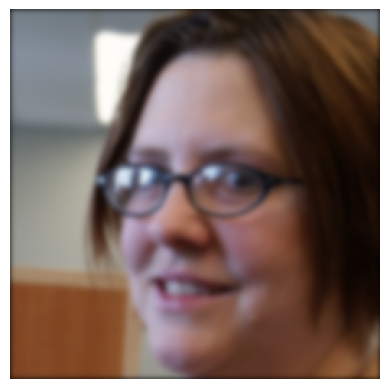} & \includegraphics[width=1.3cm]{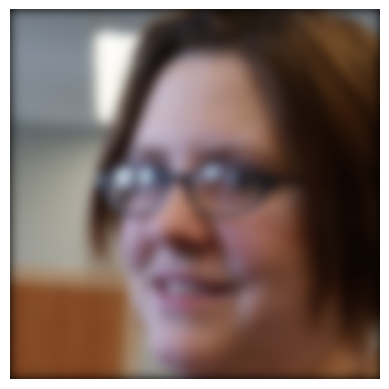} & \includegraphics[width=1.3cm]{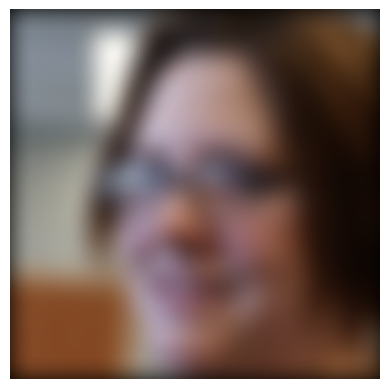} & \includegraphics[width=1.3cm]{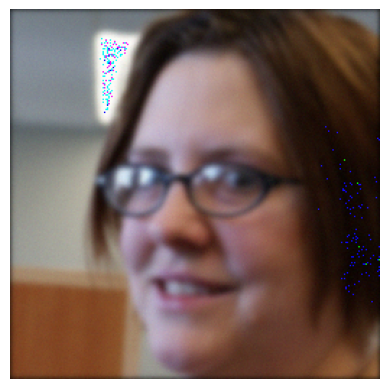} & \includegraphics[width=1.3cm]{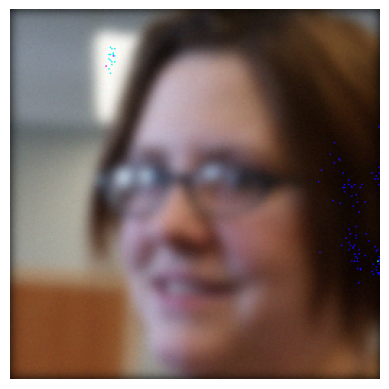} & \includegraphics[width=1.3cm]{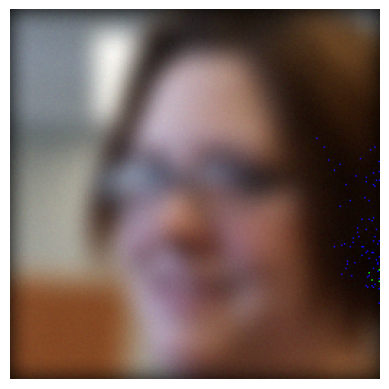}\\
Supervised & \includegraphics[width=1.3cm]{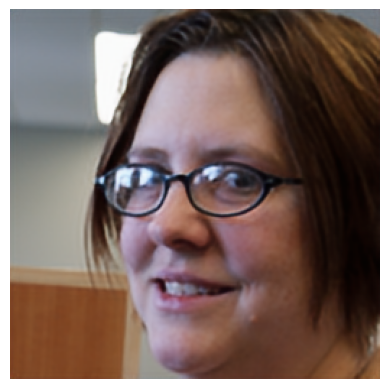} & \includegraphics[width=1.3cm]{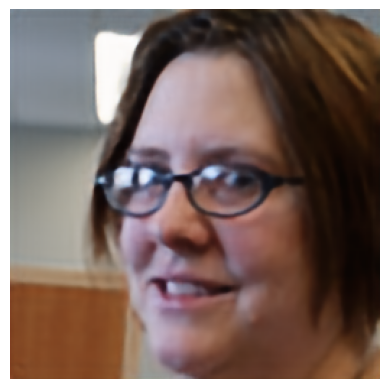} & \includegraphics[width=1.3cm]{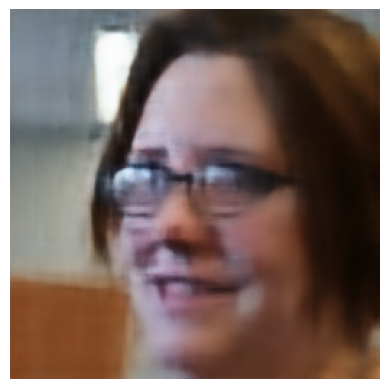} & \includegraphics[width=1.3cm]{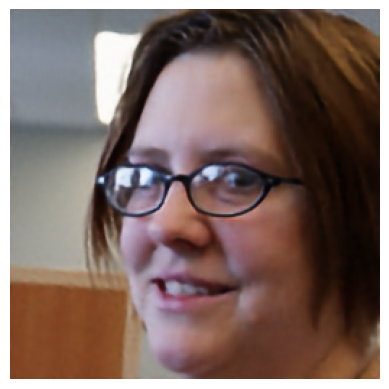} & \includegraphics[width=1.3cm]{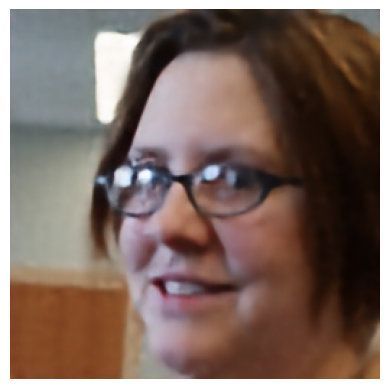} & \includegraphics[width=1.3cm]{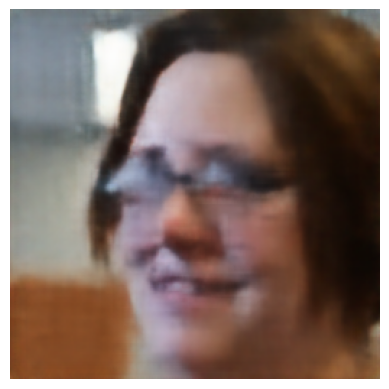}\\
ECALL & \includegraphics[width=1.3cm]{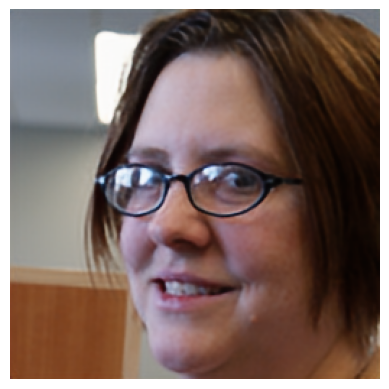} & \includegraphics[width=1.3cm]{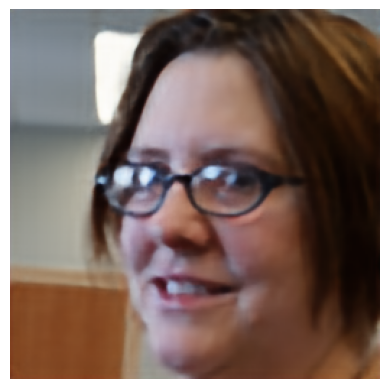} & \includegraphics[width=1.3cm]{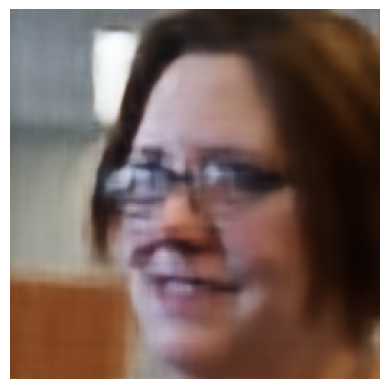} & \includegraphics[width=1.3cm]{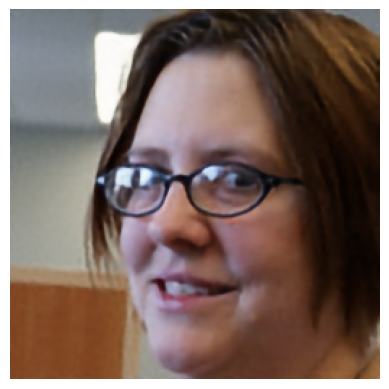} & \includegraphics[width=1.3cm]{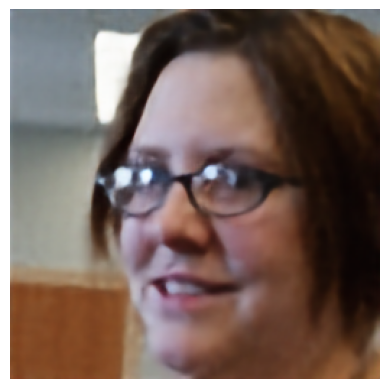} & \includegraphics[width=1.3cm]{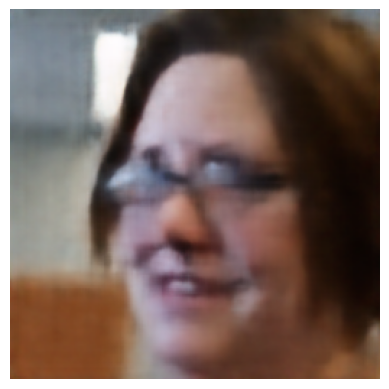} \\
\bottomrule
\end{tabular}}
\caption{Blind deconvolution results using the supervised and proposed unsupervised learning.}
\label{tbl:test}
\end{table}

\newpage
For quantitative evaluation of the kernel estimation, we use the relative $L^2$-norm error $\err \coloneqq \norm{\kernel_w - \kernel^\star}_2 / \norm{\kernel^\star}_2 $ and the maximum of normalized convolution \cite{hu2012good} $\mcn \coloneqq \max [ \kernel_w \ast \kernel^\star / ( \norm{\kernel_w}_2 \norm{\kernel^\star}_2 )] $. The quantitative analysis of the kernel estimation is shown in Table \ref{tbl:kernelerror}. To evaluate the quality of the deconvolved images we use structural similarity index measure (SSIM) and peak signal-to-noise ratio (PSNR). Results are shown in Tables \ref{tbl:kernelerror} and \ref{tbl:testerror}. Overall, we can see that ECALL performs comparably to supervised learning.

\begin{table}[htb!]
\centering
\makebox[0.9\linewidth]{
\begin{tabular}{|c|ccc|ccc|}
\toprule
& &\centering Noiseless & & &\centering Noisy & \tabularnewline
\midrule
 &\centering Broad &\centering Medium &\centering Narrow &\centering Broad &\centering Medium &\centering Narrow \tabularnewline
\midrule
Measurements&\centering $\err$ $\downarrow$ &\centering $\err$ $\downarrow$ &\centering $\err$ $\downarrow$ &\centering $\err$ $\downarrow$ &\centering $\err$ $\downarrow$ &\centering $\err$ $\downarrow$ \tabularnewline
&\centering MNC$\uparrow$ &\centering MNC$\uparrow$ &\centering MNC$\uparrow$ &\centering MNC$\uparrow$ &\centering MNC$\uparrow$ &\centering MNC$\uparrow$\tabularnewline
\midrule
Supervised& $0.0765$ & $0.0129$& $0.0029$& $0.0765$& $0.0124$& $0.0050$\\
& $0.9584$ & $0.9950$ & $0.9937$ & $0.9589$ & $0.9926$ & $0.9892$\\
\midrule
ECALL & $0.0434$ & $0.0224$& $0.0119$ & $0.0571$ & $0.0598$ & $0.0879$ \\
& $0.9991$ & $0.9997$ & $0.9999$ & $0.9984$ & $0.9982$ & $0.9962$\\
\bottomrule
\end{tabular}}
\caption{Quantitative evaluation of kernel estimation using $\err$ (relative $L^2$-error) and $\mcn$ (maximum of normalized convolution). The arrows $\uparrow$ and $\downarrow$ indicate whether higher or lower values indicate better performance. }
\label{tbl:kernelerror}
\end{table}

\begin{table}[htb!]
\centering
\makebox[\linewidth]{
\begin{tabular}{|c|ccc|ccc|}
\toprule
& &\centering Noiseless & & &\centering Noisy & \tabularnewline
\midrule
 &\centering Broad &\centering Medium &\centering Narrow &\centering Broad &\centering Medium &\centering Narrow \tabularnewline
\midrule
Measurements&\centering SSIM$\uparrow$ &\centering SSIM$\uparrow$ &\centering SSIM$\uparrow$ &\centering SSIM$\uparrow$ &\centering SSIM$\uparrow$ &\centering SSIM$\uparrow$ \tabularnewline
&\centering PSNR$\uparrow$ &\centering PSNR$\uparrow$ &\centering PSNR$\uparrow$ &\centering PSNR$\uparrow$ &\centering PSNR$\uparrow$ &\centering PSNR$\uparrow$\tabularnewline
\midrule
Supervised& $0.9437$ & $0.8636$& $0.7080$& $0.9065$& $0.8129$& $0.6684$\\
& $33.43$dB & $29.42$dB & $25.11$dB & $31.70$dB & $27.98$dB & $24.31$dB\\
\midrule
ECALL & $0.9435$ & $0.8549$& $0.6956$ & $0.9061$ & $0.8133$ & $0.6720$ \\
& $33.49$dB & $29.17$dB & $24.88$dB & $31.67$dB & $28.03$dB & $24.31$dB\\
\bottomrule
\end{tabular}}
\caption{Quantitative evaluation of reconstruction results using SSIM and PSNR. The arrows $\uparrow$ and $\downarrow$ indicate whether higher or lower values indicate better performance.}
\label{tbl:testerror}
\end{table}

\section{Conclusion}
\label{sec:conclusion}

Blind deconvolution consists in recovering an original image $\signal$ from its noisy blurred observation $\data = \kernel^\star \ast \signal + \noise$. Blind deconvolution is a highly ill-posed problem, since both the original image $\signal$ and the kernel $\kernel^\star$ need to be estimated. To address the ill-posed nature of the problem, blind deconvolution is typically approached with additional prior information on $\signal$ and $\kernel^\star$. Recently, deep learning-based approaches have shown great promise in blind deconvolution, but most of these approaches require paired datasets of original and blurred images, which are often difficult to obtain. To address this, in this work we propose an unsupervised learning approach (ECALL) to blind deconvolution that uses seperate unpaired collections of original and blurred images. Experimental results demonstrate the feasibility and performance of the proposed algorithm.

In short, the main novelty of ECALL is to construct special statistics using the distributions of $\signal$ and $\data$, respectively, which allows us to determine the kernel from which we can then create virtual supervised data pairs. In particular, we use the expectation of the Fourier transform and its absolute value for kernel estimation. There are a number of interesting follow-ups to our work, including generalisation to other inverse problems with unknown forward operator, development of better statistics, analysis of the influence of empirical data and architecture on the estimates, and comparison with GAN-based methods for blind image deconvolution. 

\section{Acknowledgments}
G. Hwang was supported by the National Research Foundation of Korea (NRF) grant funded by the Korean government (MSIT) (NRF-2021R1F1A1048120).
% -----------------------------
%\bibliographystyle{unsrt}
% \bibliographystyle{plain}
%\bibliography{ref}

\bibliographystyle{plain}
\bibliography{references}{}

\end{document}